\documentclass[12pt,reqno]{amsart}

\usepackage{amssymb}
\usepackage{hyperref}
\usepackage{tikz}

\numberwithin{equation}{section}

\newtheorem{theorem}{Theorem}[section]
\newtheorem{proposition}[theorem]{Proposition}

\theoremstyle{definition}

\begin{document}

\baselineskip=15pt

\title[finite dimensional algebras of Vector Fields in $\mathbb{C}^N$]{Lie's
classification of finite dimensional algebras of Vector Fields in $\mathbb{C}^N$}

\author[H. Azad]{Hassan Azad}

\address{Abdus Salam School of Mathematical Sciences, GCU, Lahore 54600, Pakistan}

\email{hassan.azad@sms.edu.pk}

\author[I. Biswas]{Indranil Biswas}

\address{Department of Mathematics, Shiv Nadar University, NH91, Tehsil Dadri,
Greater Noida, Uttar Pradesh 201314, India}

\email{indranil.biswas@snu.edu.in, indranil29@gmail.com}

\author[S. W. Shah]{Said Waqas Shah}

\address{Abdus Salam School of Mathematical Sciences, GCU, Lahore 54600, Pakistan}

\email{waqas.shah@sms.edu.pk}

\date{}

\begin{abstract}
Brief proofs of classical results of Lie on finite dimensional subalgebras
of vector fields in two and three variables are outlined.
The results for algebras of maximal rank for vector fields in
$\mathbb{C}^N$ --- $N$ arbitrary --- are also given.
\end{abstract}

\maketitle

\section{Introduction}

The proofs of Lie's results on classification of finite subalgebras of vector fields
in two and three variables \cite{Lie2} continue
to remain relatively inaccessible.
The main reason seems to be that Lie did not make
use of tools like representation theory and
root systems because these tools were not
available to him. They did become widely known a
few years after Lie published his proofs
but were not used by specialists in the
subject.

If one uses these tools, the proofs
simplify considerably and point a direction
for the classification in higher number of variables.

We will give an outline of the main ideas and
give references to detailed proofs.
The applications of such classification
are too many to be given in this paper.
The interested reader could consult the books
of Ibragimov \cite{Ib1}, \cite{Ib2},
Olver \cite{Ol} and the article by
Ibragimov
``Sophus Lie and Harmony in Mathematical Physics''
\cite{Ib3}.

\section{Basic Definitions}

A vector field defined on an open subset $U$ of
$\mathbb{R}^N$ is a vector valued function
\[
V\ =\ (a_1,\, \cdots,\, a_N).
\]

We assume that the functions $a_i$ are $C^\infty$.
If these functions are real or complex analytic, then
$V$ is called a real analytic or a complex analytic
vector field.

We identify $V$ with the directional derivative
in the direction $V$.
Thus if $V$ is defined on an open subset $U$
and $f$ is a $C^\infty$--function on $U$, then
\[
V(f)\ =\ \sum a_i \frac{\partial f}{\partial x_i}.
\]

If $W$ is another vector field defined on $U$,
then $[V,\, W](f)$ is, by definition,
\[
V(W(f))\, -\, W(V(f)).
\]

With this definition the space of all vector fields on $U$
becomes a Lie algebra.

If $L_1,\, L_2$ are two such algebras, Lie considered
them to be equivalent if by a local change of variables,
one can be transformed into the other.

Thus if $L$ is an abelian algebra of dimension $r$
defined on an open subset $U$ of $\mathbb{R}^N$ and
$X_1,\,\cdots,\,X_r$ is a basis of $L$ and $p$ a point
in $U$ with the tangent vectors $X_1(p),\,\cdots,\,X_r(p)$ linearly independent,
then there is a change of variables
$\widetilde{x}_1,\, \cdots,\widetilde{x}_N$ in which
\[
X_i\ =\ \frac{\partial}{\partial \widetilde{x}_i}
\]
on a neighbourhood of $p$.

Lie's classifications were based on the nature of
generic orbits, whether the algebra was transitive
or intransitive, whether it was primitive or
imprimitive --- meaning that it had an invariant
foliation.

He classified complex analytic finite dimensional
algebras of analytic vector fields in $2$ variables
completely and partially when $N\,=\,3$.

A complete classification is claimed in Amaldi \cite{Am1}, \cite{Am2},
but to date no one has been able to verify this.
Dubrovin \cite{Du} has given a reason why
the classification could not be complete for the
solvable algebras.

It is therefore natural to classify semisimple
and algebras with a proper Levi decomposition
and compare with Lie and Amaldi.

We have attempted this in the papers
\cite{ABMS1}, \cite{ABMS2} and \cite{AABMS}.
Semisimple algebras of vector fields on $\mathbb{C}^N$
of maximal rank have been classified in \cite{ABM2}.

The method in all these classifications is algebraic
and and it is based on the equality of algebraic and
geometric ranks of Cartan subalgebras of semisimple
Lie algebras of vector fields \cite{ABM}.

\subsection{Geometric Rank}

Let $L$ be a Lie algebra of vector fields defined
on an open subset $U$ of $\mathbb{R}^N$.
The geometric rank of $L$ is, by definition,
\[
\max \{ \dim (X(p))\,\,\big\vert\,\,\, X \,\in\, L,\ p \,\in\, U\}.
\]

Thus the rank of the algebra
\[
\left\langle
\frac{\partial}{\partial x}, \ y\frac{\partial}{\partial x},\
\cdots,\ y^{N}\frac{\partial}{\partial x}
\right\rangle
\]
is $1$ while its dimension is $(N+1)$.

If $L$ is a complex semisimple Lie algebra of
vector fields and $C$ is a Cartan subalgebra of $L$,
then a basic fact is that its geometric rank and
dimension coincide \cite{ABM}.

\subsection{Highest Weights}

Equally we need to recall the definition of highest weights.

If $B$ is a Borel subalgebra of a complex semisimple
algebra $L$, and $C$ is a Cartan subalgebra of $L$
contained in $B$, then $B\, =\, C \oplus B'$.

If $\rho\, :\, L\, \longrightarrow\, \mathfrak{gl}(V)$ is a finite dimensional
representation of $L$, a vector $v$ in $V$ is called
a highest weight vector if $\rho(B')(v)\, =\, 0$ and
$v$ is a common eigenvector for $\rho(C)$.

The reader is referred to Kirillov \cite{Ki} for
all undefined terms in this paper.

\section{Applications}

Before giving applications, let us give for completeness, Lie's classification of finite dimensional 
subalgebras of vector fields on the line, up to local equivalence.

\medskip

\noindent\textbf{Sketch:}\
If \( X \,=\, f(x)\,\frac{d}{dx} \) commutes with \( Y \,=\, g(x)\,\frac{d}{dx} \) and \( f \) is not
identically zero, then \( Y \) must be a multiple of \( X \). Therefore if \( L \) is a semisimple algebra
of vector fields on the line, its Cartan subalgebras must be of dimension \(1\) and \( L \) must be
isomorphic to \( \mathfrak{sl}(2,\mathbb{C}) \). Hence \( L \) is generated by \( X,\,Y \) which are
eigenvectors of \( H \,=\, [X,\,Y] \) with nonzero and opposite eigenvalues. Therefore, in the coordinates
in which the Cartan algebra is generated by \( \frac{d}{dx} \), we may suppose that
\[
X\ =\ \exp(x)\,\frac{d}{dx}
\ \ \, \text{ and }\ \ \,
Y\ =\ \exp(-x)\,\frac{d}{dx}
\]
(up to a constant). Moreover as the centralizer of \( X \) is \( X \), up to constants, we see that there 
can be no highest weight vectors in any complement to \( L \) in any extension of \( L \).

\medskip

Now if \( L \) is nilpotent then it must be abelian of dimension \(1\) as a nilpotent algebra has a 
nonzero center.

\medskip

If \( L \) is solvable and not nilpotent, then in the coordinate \( x \) in which the commutator of \( L 
\) is generated by \( \frac{d}{dx} \) we see that if
\[
Z\ =\ f(x)\,\frac{d}{dx}
\]
is in \( L \), then \( f'(x) \) is a constant and therefore \( f(x)\, =\, ax + b \).

\medskip

This completes the local classification of finite dimensional algebras of vector fields on the line.

In the rest of the paper we give applications of the equality of
geometric rank and dimension of Cartan subalgebra to the classification
problem of finite dimensional algebra of vector fields.

\medskip
\textbf{Notation.}\ $A_n$ is the Lie algebra of the Lie group $\text{SL}(n+1,
{\mathbb C})$, $B_n$ is the Lie algebra of the Lie group $\text{SO}(2n+1,
{\mathbb C})$ and $D_n$ is the Lie algebra of the Lie group $\text{SO}(2n,
{\mathbb C})$. The algebra ${\mathfrak g}_2$ is a subalgebra of
$\mathfrak{so}(8, {\mathbb C})$, given as fixed point locus of the automorphism
of order $3$, induced by the graph automorphism of the Dynkin diagram
for $D_4$.

\begin{proposition}\label{prop1}
The only semisimple algebra of vector fields
on $\mathbb{C}^2$ can only be of types
$A_1,\, A_2$ or $A_1 \times A_1$.
\end{proposition}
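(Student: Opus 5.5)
The plan is to use the equality of geometric rank and dimension of Cartan subalgebras \cite{ABM}. Let $L$ be a semisimple Lie algebra of vector fields on an open subset of $\mathbb{C}^2$ and let $C$ be a Cartan subalgebra. Since $C$ is abelian and its geometric rank equals $\dim C$, at a generic point the values of a basis of $C$ are linearly independent. Hence $\dim C\le 2$, so $\operatorname{rank} L\le 2$.

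The semisimple algebras of rank at most $2$ are $A_1$, $A_1\times A_1$, $A_2$, $B_2\,(=C_2)$ and $\mathfrak g_2$. The task is therefore to exclude $B_2$ and $\mathfrak g_2$. If $\operatorname{rank} L=2$, choose coordinates near a generic point with $C=\langle \partial_x,\partial_y\rangle$. A root vector $X_\alpha$ satisfies $[H,X_\alpha]=\alpha(H)X_\alpha$ for $H\in C$. Solving this componentwise gives
\[
X_\alpha \,=\, e^{\alpha_1x+\alpha_2y}\,(a\,\partial_x+b\,\partial_y),
\]
with constants $a,b$, where $(\alpha_1,\alpha_2)$ are the values of $\alpha$ on $\partial_x,\partial_y$. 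So each root space sits inside $e^{\alpha}\cdot\mathbb{C}^2$, where $\mathbb{C}^2$ denotes the constant fields. This gives an explicit model: roots become linear exponents, and brackets reduce to
\[
[e^{\alpha}v,\ e^{\beta}w] \,=\, e^{\alpha+\beta}\big(\alpha(v)\,w-\beta(w)\,v\big).
\]

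Next I use these bracket relations to constrain the root system. For each root $\alpha$ the vector $v_\alpha$ must make $\langle e^{\alpha}v_\alpha,\ e^{-\alpha}v_{-\alpha},\ C\rangle$ an $\mathfrak{sl}_2$. The bracket of the first two is $\alpha(v_\alpha)v_{-\alpha}+\alpha(v_{-\alpha})v_\alpha$, which must equal the coroot $h_\alpha$ (up to scale). Also $[h_\alpha, e^{\alpha}v_\alpha]=\alpha(h_\alpha)e^{\alpha}v_\alpha$ holds automatically. The key constraint is that whenever $\alpha+\beta$ is not a root (and $\neq0$), we need $\alpha(v_\beta)v_\alpha=\beta(v_\alpha)v_\beta$ after an appropriate sign convention. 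When $v_\alpha,v_\beta$ are independent this forces $\alpha(v_\beta)=\beta(v_\alpha)=0$.

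For $B_2$, take the long roots $\pm\epsilon_1\pm\epsilon_2$ and the short roots $\pm\epsilon_i$. Strongly orthogonal pairs, such as $\epsilon_1+\epsilon_2$ and $\epsilon_1-\epsilon_2$, give conditions $\alpha(v_\beta)=0=\beta(v_\alpha)$, which pin each $v$ to a specific line. For example, $v_{\epsilon_1+\epsilon_2}$ must be killed by $\epsilon_1-\epsilon_2$ and by $-\epsilon_1+\epsilon_2$, so it is proportional to $h_{\epsilon_1+\epsilon_2}$-dual data. Combining these with the nonvanishing requirement on brackets $[X_\alpha,X_\beta]\neq0$ when $\alpha+\beta$ is a root (e.g.\ $\epsilon_1+\epsilon_2=\epsilon_1+\epsilon_2$ from the short roots $\epsilon_1,\epsilon_2$) should give a contradiction. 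For $\mathfrak g_2$, it contains a $B_2$-like configuration; more directly, it contains $A_2$ (long roots) together with short roots, and the same kind of linear conditions can be shown inconsistent.

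A cleaner alternative for the exclusion, which I would try first, is a dimension/isotropy count. A transitive action of $L$ on a $2$-dimensional germ corresponds to a subalgebra of codimension $2$. But $B_2$ (dimension $10$) has no subalgebra of dimension $8$, and $\mathfrak g_2$ (dimension $14$) has none of dimension $12$: maximal parabolics of $B_2$ have dimension $7$, and those of $\mathfrak g_2$ have dimension $9$. An intransitive action has $1$-dimensional orbits, giving a codimension-$1$ subalgebra, which forces $\mathfrak{sl}_2$-type quotients by the line classification sketched above. So $L$ would have to be $A_1$, contradicting rank $2$. The faithfulness issue (kernel of the isotropy) is handled by semisimplicity, since ideals are sums of simple factors.

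The main obstacle is making this orbit argument rigorous. One must justify that the orbit dimension equals the codimension of the isotropy subalgebra at a generic point, and classify subalgebras of small codimension via Borel–Morozov. I expect the explicit exponential-root computation to be the fallback if that subalgebra classification is to be avoided.
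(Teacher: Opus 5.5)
Your primary argument, the isotropy count, is correct, but it follows a different route from the paper's.

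\textbf{The paper's route.} After the same rank bound, the paper excludes $B_2$ and $\mathfrak g_2$ through the subalgebras $A_1\times A_1\subset B_2$ and $A_2\subset\mathfrak g_2$. The nilradical of a Borel subalgebra of each contains a $2$-dimensional abelian subalgebra of geometric rank $2$, which straightens to $\langle\partial_x,\partial_y\rangle$. This is self-centralizing in $V(\mathbb{C}^2)$. So a highest weight vector in an invariant complement would be a constant field, hence already in the subalgebra, and no complement can exist.

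\textbf{Your route.} You use that $L_p=\ker(\mathrm{ev}_p)$ is a subalgebra, since $[X,Y](p)=DY(p)X(p)-DX(p)Y(p)$. Its codimension is $\dim L(p)\le 2$, and it is proper wherever some field is nonzero. You combine this with the absence of proper subalgebras of codimension $\le 2$ in $B_2$ and $\mathfrak g_2$.
\begin{itemize}
\item This handles the transitive and intransitive cases at once. Your separate orbit discussion and the ``faithfulness issue'' are therefore superfluous, and the ``main obstacle'' about orbit dimension is plain linear algebra.
\item The real input is Morozov's theorem: a maximal subalgebra of a complex semisimple Lie algebra is either parabolic or reductive. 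This, not Borel--Morozov, is the result you need.
\item To finish, note that a reductive subalgebra has rank $\le 2$. Among reductive algebras of rank $\le 2$, only $\mathfrak{sl}_3$ has dimension $8$ or $9$, and none has dimension $12$ or $13$. Since $\mathfrak{sl}_3\not\subset\mathfrak{so}_5$, the largest proper subalgebras are the maximal parabolics, of codimension $3$ in $B_2$ and $5$ in $\mathfrak g_2$.
\end{itemize}

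\textbf{What each buys.} Your route is Lie's homogeneous-space viewpoint and leans on the external structure theory of maximal subalgebras. The paper's route stays inside the rank/highest-weight framework and proves something stronger. It shows that $A_1\times A_1$ and $A_2$ acting on $\mathbb{C}^2$ admit no proper finite dimensional extension in $V(\mathbb{C}^2)$ at all. That is exactly what is reused for Proposition~\ref{prop2} and in dimension $3$.

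\textbf{Your fallback.} It is not needed, and as written it would not work:
\begin{itemize}
\item The bracket is transposed; it should be $[e^{\alpha}v,e^{\beta}w]=e^{\alpha+\beta}\bigl(\beta(v)\,w-\alpha(w)\,v\bigr)$.
\item The $B_2$ case is never carried through to a contradiction.
\item $\mathfrak g_2$ contains no $B_2$ configuration.
\end{itemize}
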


\begin{proof}
By the equality of the geometric and algebraic ranks
of Cartan subalgebras of semisimple Lie algebras,
the Cartan subalgebra of a semisimple algebra $S$ of vector
fields on $\mathbb{C}^2$ can be of dimension at most $2$.
Thus $S$ can only be of type
$$A_1,\ A_1 \times A_1,\ A_2,\ B_2\ \text{ or }\ {\mathfrak g}_2.$$

The algebra $B_2$ contains an algebra of type
$A_1 \times A_1$ and ${\mathfrak g}_2$ contains an algebra of type $A_2$.
The derived algebra of Borel subalgebras of both these
types contains an abelian subalgebra of rank $2$.
Thus in suitable coordinates this algebra is
\[
\langle \partial_x,\ \partial_y \rangle.
\]

Therefore if $B_2$ could be realized as an algebra of
vector fields on $\mathbb{C}^2$, then we would have
$A_1 \times A_1\ \subset\ B_2$ and $A_1\times A_1$ could
not have an invariant complement.

Similarly $A_2\ \subset\ {\mathfrak g}_2 \ \subset\ V(\mathbb{C}^2)$
shows that $A_2$ could not have an invariant complement
in ${\mathfrak g}_2$.
\end{proof}

\begin{proposition}\label{prop2}
If $L\ =\ S \ltimes R$ is a Levi decomposable algebra
of vector fields on $\mathbb{C}^2$, then $S$ must be
isomorphic to $\mathfrak{sl}(2,\mathbb{C})$.
\end{proposition}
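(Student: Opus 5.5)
By Proposition~\ref{prop1}, the Levi factor $S$ of $L\,=\,S\ltimes R$ is of type $A_1$, $A_2$ or $A_1\times A_1$. So it suffices to show that when $S$ is of type $A_2$ or $A_1\times A_1$, the radical $R$ must be zero, so that $L$ is not properly Levi decomposable. The key input is again the equality of geometric and algebraic ranks of Cartan subalgebras \cite{ABM}. In both cases a Cartan subalgebra $C$ of $S$ has dimension $2$, so its geometric rank is $2$. Hence in suitable local coordinates near a generic point we may take $C\,=\,\langle \partial_x,\,\partial_y\rangle$.

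The plan is to view $R$ as a finite dimensional $S$-module under the adjoint action. If $R\,\neq\,0$, then $R$ contains a nonzero irreducible $S$-submodule, and hence a highest weight vector $Z$ for a Borel subalgebra $B\,=\,C\oplus B'$. Being a weight vector for $C\,=\,\langle\partial_x,\partial_y\rangle$ means $[\partial_x,Z]\,=\,\lambda_1 Z$ and $[\partial_y,Z]\,=\,\lambda_2 Z$. This forces
\[
Z\ =\ e^{\lambda_1 x+\lambda_2 y}\,(a\,\partial_x+b\,\partial_y)
\]
with $a,\,b$ constants. Likewise every root vector of $S$ has the form $e^{\alpha(x,y)}(\text{constant vector field})$, with $\alpha$ the corresponding root, viewed as a linear function.

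The next step is to use the highest weight condition $[B',Z]\,=\,0$. I would follow the pattern of the proof of Proposition~\ref{prop1} for $B_2$ and $\mathfrak{g}_2$. The derived algebra $B'$ contains two root vectors $E_{\alpha}\,=\,e^{\alpha}v_\alpha$ and $E_\beta\,=\,e^{\beta}v_\beta$ whose constant parts $v_\alpha,\,v_\beta$ are linearly independent. For $A_1\times A_1$ these are the two commuting positive roots; for $A_2$ they are the simple roots. The bracket of two exponential-constant fields is
\[
[e^{\alpha}v,\ e^{\mu}w]\ =\ e^{\alpha+\mu}\bigl(v(\mu)\,w-w(\alpha)\,v\bigr).
\]
Requiring this to vanish for $v\,=\,v_\alpha$, $v_\beta$ and $w$ the constant part of $Z$ should force either $w\,=\,0$, or $\mu$ to coincide with a weight making $Z$ a multiple of an element of $S$. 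Alternatively I would apply the centralizer argument: in the $A_1\times A_1$ case, the centralizer of $\langle E_\alpha,E_\beta\rangle$, which is a rank $2$ abelian algebra, is itself, exactly as $\partial_x,\partial_y$ has centralizer the constant fields. Then $Z$ lies in $\langle E_\alpha,E_\beta\rangle\,\subset\, S$, contradicting $S\cap R\,=\,0$. The same argument handles $A_2$, using the explicit coordinates of $A_2\,\cong\,\mathfrak{sl}(3,\mathbb{C})$ acting projectively.

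The main obstacle is the $A_2$ case. There the positive root vectors do not commute, so the centralizer trick needs care. I would first try to use the two-dimensional abelian subalgebra spanned by the highest root vector and a simple root vector, which exists in $\mathfrak{sl}(3)$. Straightening it to $\langle\partial_u,\partial_v\rangle$ forces the highest weight vector $Z$, which commutes with all of $B'$, to be a constant field in these coordinates. That makes it lie in the span of this abelian subalgebra, hence in $S$. This yields the contradiction and shows $R\,=\,0$, so $S\,\cong\,\mathfrak{sl}(2,\mathbb{C})$.
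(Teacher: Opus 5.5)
Your proposal is correct and is essentially the paper's argument with its terse ``therefore'' spelled out: the derived algebra of a Borel subalgebra of $A_1\times A_1$ or $A_2$ contains an abelian subalgebra of geometric rank $2$, which straightens to $\langle\partial_u,\partial_v\rangle$, so a highest weight vector of $R$ commutes with it, is therefore a constant combination of these two fields, and hence lies in $S\cap R=0$. One point should be made explicit. In the $A_2$ case you must choose the right simple root: in the projective realization $\langle\partial_x,\partial_y\rangle$ has geometric rank $2$ but $\langle\partial_x,y\partial_x\rangle$ does not. Your bracket formula shows that at least one of the two choices always has rank $2$, since otherwise $v_\alpha,v_\beta,v_{\alpha+\beta}$ would all be parallel to some $v$ with $\alpha(v)=\beta(v)=0$, forcing $v=0$.
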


\begin{proof}
If $S$ were of rank $2$, then $S$ must be isomorphic to
$\mathfrak{sl}(2,\mathbb{C})\times \mathfrak{sl}(2,\mathbb{C})$
or $\mathfrak{sl}(3,\mathbb{C})$ and the derived algebras of 
Borel subalgebras would have an abelian algebra of rank $2$.
Therefore, it would have no highest weight vectors in the radical
$\mathcal R$.
\end{proof}

\begin{proposition}\label{prop3}
If $L$ is a semisimple algebra of vector fields
on $\mathbb{C}^N$ and it has a Cartan subalgebra of
dimension $N$, then
\[
L\ \cong\ \bigoplus_{i=1}^{m}
\mathfrak{sl}(k_i+1,\mathbb{C}),
\ \ \, k_1 + \cdots + k_m\ =\ N.
\]
\end{proposition}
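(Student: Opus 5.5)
We would follow the pattern of Proposition \ref{prop1}: straighten the Cartan subalgebra, see what this forces on root vectors, and conclude that every simple factor of $L$ is of type $A$.

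\textbf{Step 1: coordinates.} Since the geometric and algebraic ranks of a Cartan subalgebra $C$ coincide and $\dim C = N$, at a generic point $p$ the fields $H_1,\dots,H_N$ of a basis of $C$ are linearly independent. Because $C$ is abelian, there are local coordinates $x_1,\dots,x_N$ near $p$ with $H_i = \partial_{x_i}$.

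\textbf{Step 2: root vectors.} Let $X_\alpha$ be a root vector, so that $[\partial_{x_i}, X_\alpha] = \alpha(H_i) X_\alpha$. Solving these equations coefficientwise gives
\[
X_\alpha \,=\, e^{\langle \alpha, x\rangle} \sum_j c_j^\alpha \,\partial_{x_j},
\]
with constant vectors $c^\alpha \in \mathbb{C}^N$. For two roots, a direct computation gives
\[
[X_\alpha, X_\beta] \,=\, e^{\langle\alpha+\beta,x\rangle}\bigl(\beta(c^\alpha)\, c^\beta - \alpha(c^\beta)\, c^\alpha\bigr),
\]
where $\alpha(c)$ denotes $\sum_j \alpha(H_j)c_j$.

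Applying this to $\beta = -\alpha$, the bracket $[X_\alpha, X_{-\alpha}]$ is a nonzero multiple of the coroot $h_\alpha$, realised as the constant field $(\alpha(c^{-\alpha}) c^{\alpha} - \alpha(c^\alpha) c^{-\alpha})$ up to sign conventions. The relations $[h_\alpha, X_{\pm\alpha}] = \pm 2X_{\pm\alpha}$ then pin down $c^{\pm\alpha}$ relative to $h_\alpha$.

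\textbf{Step 3: the main obstacle.} The plan is to show that no simple factor of $L$ can contain a root string of length $\ge 3$, or two roots with $\langle\beta,\alpha^\vee\rangle = \pm 2, \pm 3$. This rules out $B_n$, $C_n$, $D_n$ $(n\ge 4)$, $E_6$, $E_7$, $E_8$, $F_4$ and $\mathfrak{g}_2$, leaving only type $A$.

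The direct attack uses the formula above. Whenever $\alpha+\beta$ is a root, the bracket is proportional to $X_{\alpha+\beta}$. I would try first to prove that $\alpha(c^\alpha) = 0$, using the fact that $\mathrm{ad}\,X_\alpha$ is nilpotent. The vectors $c^\alpha$ should then be forced to be essentially coroot-like and to satisfy $A_n$-type relations.

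This direct attack fails for $D_n$. There all roots have the same length and every string has length at most $2$, so the string and Cartan-integer conditions are satisfied and the computation cannot rule $D_n$ out.

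So I would follow the paper's approach instead: a simple factor of rank $k$ must act on a $k$-dimensional slice with a $k$-dimensional Cartan subalgebra. Such an action gives a homogeneous space $G/P$ of dimension $\le k$ on which $G$ acts with an open orbit. One then needs $\dim G/P \le \operatorname{rank}$ for some parabolic $P$. Comparing minimal dimensions of flag varieties, $\dim G/P_{\min} = \operatorname{rank}$ happens only for $\mathbb{P}^k$ and $\mathrm{SL}(k+1)$. For $B_n$, $D_n$, $\mathfrak{g}_2$ and the other exceptional types, the minimal homogeneous spaces (quadrics and so on) have dimension exceeding the rank; for $D_4$, for instance, the minimum is $6 > 4$.

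\textbf{Step 4: sum of ranks.} Finally, a decomposition $L = \bigoplus L_i$ into simple factors gives $\sum \operatorname{rank} L_i = N$. This yields $k_1+\cdots+k_m = N$, with each $L_i \cong \mathfrak{sl}(k_i+1,\mathbb{C})$ realised as projective vector fields on $\mathbb{C}^{k_i}$.

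Step 3 is the main obstacle. Justifying the passage from a local action to a homogeneous space $G/P$ requires showing that the isotropy subalgebra at a generic point is parabolic. This follows because the isotropy contains a Borel-type subalgebra, coming from the exponential form of the root vectors in Step 2.
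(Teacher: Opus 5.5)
The gap is in Step~3, where you suspect it. Because $C=\langle\partial_{x_1},\dots,\partial_{x_N}\rangle$ spans every tangent space, $L=C\oplus L_p$. At $x=0$ the isotropy $L_p$ is spanned by the fields $X_\alpha-\sum_j c^\alpha_j\,\partial_{x_j}$, one for each root $\alpha$. No root vector vanishes anywhere, so none lies in $L_p$. Thus $L_p$ just repackages the vectors $c^\alpha$. Your assertion that it contains a Borel subalgebra ``coming from the exponential form of the root vectors'' does not follow from Step~2; it is the same algebraic problem you abandoned for $D_n$. What your route really needs is the theorem that a proper subalgebra of a complex simple Lie algebra has codimension at least $\min_P\dim G/P$. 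Your table of these minima is fine, but the theorem itself is a nontrivial structural result, typically proved via Morozov--Karpelevich plus dimension bounds for maximal semisimple subalgebras, and you do not supply it. The passage to simple factors is also unproved. A factor $L_i$ of rank $k_i$ may a priori have orbits of dimension larger than $k_i$, so ``acts on a $k_i$-dimensional slice'' and Step~4 need an argument. For instance, you would have to show that the projection $\pi_i(L_p)$, which satisfies $\pi_i(L_p)+C_i=L_i$, is a proper subalgebra of $L_i$.

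Your direct attack also starts from a false premise. Applying $\alpha$ to your formula gives $\alpha([X_\alpha,X_{-\alpha}])=-2\,\alpha(c^\alpha)\,\alpha(c^{-\alpha})$. This must be nonzero, since $[X_\alpha,X_{-\alpha}]$ is a nonzero multiple of the coroot, so $\alpha(c^\alpha)\neq 0$ always; already $e^x\,d/dx$ in $\mathfrak{sl}(2,\mathbb{C})$ shows this. Nor is the flag-variety argument the paper's approach. The proof is deferred to \cite{ABM2}, but Proposition~\ref{prop1} and Section~\ref{se4} indicate an algebraic exclusion argument. A simple algebra not of type $A$ contains a full-rank subalgebra that is a sum of type-$A$ algebras, for example $B_2\supset A_1\times A_1$, $\mathfrak{g}_2\supset A_2$, $D_4\supset A_1^4$. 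When realized with rank $N$, that subalgebra has an abelian subalgebra of geometric rank $N$ inside the derived algebra of a Borel subalgebra, and this straightens to $\langle\partial_{x_1},\dots,\partial_{x_N}\rangle$. Its centralizer consists of constant fields, so a highest weight vector in an invariant complement would already lie in the subalgebra, forcing the complement to vanish. This needs the type-$A$ normal forms but no classification of maximal subalgebras, and it is what rules out $D_n$.
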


A proof of Proposition \ref{prop3} is given in \cite{ABM2}.

\subsection{Classification of semisimple algebras of
vector fields on $\mathbb{C}^3$}\label{se4}

Semisimple algebras of
vector fields on $\mathbb{C}^3$ can only be of rank at most $3$.
Besides algebras of maximal rank, which are of type $A_1\times A_1
\times A_1$, $A_1\times A_2$ or $A_3$, we are left with types
$A_1$, $A_1\times A_1$, $A_2$, $B_2$ and $\mathfrak{g}_2$.

The main idea of canonical forms of these algebras is given in
Section 2 of \cite{AABMS}.

The Lie algebra $A_2$ contains the Heisenberg algebra $[X,\, Y]\ =\ Z$,
while $B_2$ contains $A_1\times A_1$ and $\mathfrak{g}_2$ contains $A_2$.

Using the commutation relations given by the root system of $\mathfrak{g}_2$,
we see that $A_2$ in $V({\mathbb C}^3)$ cannot be extended to $\mathfrak{g}_2$
in $V({\mathbb C}^3)$. The canonical forms of the Heisenberg algebra and
$A_1\times A_1$ together with the commutation relations given by the root
systems are used in determining the fundamental root vectors and in finding all
all the embeddings of rank two algebra in $V({\mathbb C}^3)$.

The reader is referred to \cite{AABMS} for representations of all these
algebras as vector fields in $V({\mathbb C}^3)$. Clearly, if a semisimple
algebra $\mathcal S$ has a representation in which the derived algebra of its
Borel subalgebra has an abelian algebra of rank $3$, then $\mathcal S$
can have no abelian extensions. It is by such considerations that representations
are determined to be inequivalent or not.

\subsection{Classification of Levi--decomposable subalgebras
of vector fields on $\mathbb{C}^3$}

Let $L$ be a Levi--decomposable subalgebra of vector fields on $\mathbb{C}^3$,
say $$L\ =\ S\ltimes {\mathcal R},$$ where $S\, \subset\, V({\mathbb C}^3)$
is semisimple. The semisimple part is known from Section \ref{se4}. If the
derived algebra of a Borel subalgebra of $S$ has an abelian subalgebra of rank
$3$, then arguing as in the proof of Proposition \ref{prop3} (given in
\cite{ABM2}) it follows that $\mathcal R$ has to be zero. Thus $S$ can only be
of types $A_1$, $A_1\times A_1$, $A_2$ and $B_2$.

To find $\mathcal R$ one uses representation theory to build $\mathcal R$ and
we must ensure that $\mathcal R$ is indeed a solvable algebra.

As there is no general theory to find solvable extensions of semisimple algebras,
one has to argue case by case. This is work in progress.

In the preface of \cite{Lie2} Lie writes that he has done all these classifications
for applications in physics. He did not elaborate further on this.

\end{document}